\theoremstyle{plain}
\newtheorem{theorem}{Theorem}
\newtheorem*{main}{Main Theorem}
\newtheorem*{A}{Theorem A}
\newtheorem*{B}{Theorem B}
\newtheorem{lemma}{Lemma}[section]
\newtheorem{proposition}{Proposition}[section]
\newtheorem{corollary}{Corollary}
\theoremstyle{definition}
\newtheorem{remark}{Remark}
\numberwithin{equation}{section}
\begin{document}
\setcounter{page}{1}

\title[Essential spectrum of the sum of self-adjoint operators]
{On the essential spectrum of the sum of self-adjoint operators and the closedness of the sum of operator ranges}

\author[Ivan S. Feshchenko]{Ivan S. Feshchenko}

\address{Taras Shevchenko National University of Kyiv,
Faculty of Mechanics and Mathematics, Kyiv, Ukraine}

\email{\textcolor[rgb]{0.00,0.00,0.84}{ivanmath007@gmail.com}}

\subjclass[2010]{Primary 47B15; Secondary 46C07.}

\keywords{Self-adjoint operator, compact operator, essential spectrum, sum of operator ranges, closedness}

\date{Received: 26 December 2012; Accepted: 14 April 2013.}

\begin{abstract}
Let $\mathcal{H}$ be a complex Hilbert space, and $A_1,\ldots,A_N$ be bounded self-adjoint operators in $\mathcal{H}$
such that $A_i A_j$ is compact for any $i\neq j$.
It is well-known that $\sigma_e(\sum_{i=1}^N A_i)\setminus\{0\}=(\cup_{i=1}^N\sigma_e(A_i))\setminus\{0\}$, where
$\sigma_e(B)$ stands for the essential spectrum of a bounded self-adjoint operator $B$.

In this paper we get necessary and sufficient conditions for $0\in\sigma_e(\sum_{i=1}^N A_i)$.
This conditions are formulated in terms of the projection valued spectral measures of $A_i$, $i=1,\ldots,N$.
Using this result, we obtain necessary and sufficient conditions for the sum of ranges of $A_i$, $i=1,\ldots,N$
to be closed.

\end{abstract}

\maketitle

\section{Introduction}\label{S:introduction}

Let $\mathcal{H}$ be a complex Hilbert space, and
$A_1,\ldots,A_N$ be $N\geqslant 2$ bounded self-adjoint operators in $\mathcal{H}$
such that $A_i A_j$ is compact for $i\neq j$.
Define
\begin{equation*}
A=\sum_{i=1}^N A_i.
\end{equation*}

\subsection{On the essential spectrum of the sum of self-adjoint operators}

The following result on the essential spectrum of $A$, $\sigma_e(A)$, is well-known and has applications in scattering theory
and spectral analysis of Hankel operators (see, e.g.,~\cite[Proposition 3.1]{Pushnitski_Yafaev},\cite[Chapter 10, Lemma 1.5]{Peller}).

\begin{A}
We have
\begin{equation*}
\sigma_{e}(A)\setminus\{0\}=\left(\bigcup_{i=1}^N\sigma_{e}(A_i)\right)\setminus\{0\}.
\end{equation*}
\end{A}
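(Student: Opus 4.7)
The plan is to use Weyl's singular-sequence criterion for the essential spectrum: for a bounded self-adjoint operator $T$ on $\mathcal{H}$, one has $\lambda\in\sigma_e(T)$ if and only if there exist unit vectors $u_n\in\mathcal{H}$ with $u_n\to 0$ weakly and $(T-\lambda)u_n\to 0$ in norm. The compactness hypothesis that $A_iA_j$ is compact for $i\neq j$ will enter exclusively through the standard fact that a compact operator sends any weakly null sequence to a norm null sequence.

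For the inclusion $\bigl(\bigcup_i\sigma_e(A_i)\bigr)\setminus\{0\}\subseteq\sigma_e(A)\setminus\{0\}$, I would take $\lambda\in\sigma_e(A_j)\setminus\{0\}$ with an associated singular sequence $\{x_n\}$ for $A_j$, rewrite $x_n=\lambda^{-1}A_jx_n+o(1)$ (using $\lambda\neq 0$), and deduce that for each $i\neq j$, $A_ix_n=\lambda^{-1}(A_iA_j)x_n+o(1)\to 0$ because $A_iA_j$ is compact and $x_n\to 0$ weakly. Summing over $i$ then gives $(A-\lambda)x_n\to 0$, so $\{x_n\}$ is itself a singular sequence for $A$ at $\lambda$.

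The substantive direction is the converse, where one starts from a singular sequence $\{x_n\}$ for $A$ at some $\lambda\neq 0$ and must extract a singular sequence for some $A_j$ at $\lambda$. The key computation is to apply $A_j$ to the Weyl relation $Ax_n=\lambda x_n+o(1)$: expanding $A_jA=A_j^2+\sum_{i\neq j}A_jA_i$ and using compactness of each $A_jA_i$ together with weak nullity of $x_n$, the cross terms drop out and one is left with $A_j(A_j-\lambda)x_n\to 0$ for every $j$. Writing $y_n^{(j)}:=A_jx_n$, this reads $(A_j-\lambda)y_n^{(j)}\to 0$, and each $y_n^{(j)}$ is automatically weakly null since $A_j$ is bounded.

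The main obstacle is to promote some $y_n^{(j)}$ to a bona fide singular sequence, i.e., to produce an index $j_0$ for which $\|y_n^{(j_0)}\|$ stays bounded below along a subsequence. This is exactly where the restriction $\lambda\neq 0$ becomes essential: from $\sum_j y_n^{(j)}=Ax_n$ and $\|Ax_n\|\to|\lambda|>0$, the triangle inequality yields $\max_j\|y_n^{(j)}\|\geq|\lambda|/N$ eventually, and a routine subsequence extraction produces an index $j_0$ with $\|y_n^{(j_0)}\|\to c>0$. Normalizing $u_n:=y_n^{(j_0)}/\|y_n^{(j_0)}\|$ then delivers a singular sequence for $A_{j_0}$ at $\lambda$, yielding $\lambda\in\sigma_e(A_{j_0})$ and completing the proof.
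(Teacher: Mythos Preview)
Your proof is correct and follows essentially the same approach as the paper: both directions use Weyl singular sequences, with the cross terms $A_iA_jx_n$ killed by compactness plus weak nullity, and the converse reduces to $(A_j-\lambda)A_jx_n\to 0$ for each $j$. The only cosmetic difference is that the paper works with singular sequences that merely ``do not converge to $0$'' rather than unit vectors, so instead of your pigeonhole-and-normalize step it argues the dichotomy directly: either some $A_ix_k\not\to 0$ (done), or all $A_ix_k\to 0$, forcing $\lambda x_k\to 0$, a contradiction.
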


For convenience of the reader, we will prove Theorem A (following~\cite{Pushnitski_Yafaev}) in Section~\ref{S:proof_theorem_A}.

What one can say about the point $0$?

If $\mathcal{H}$ is infinite dimensional, then $0\in\cup_{i=1}^N\sigma_e(A_i)$.
Indeed, it is easily checked, that if $B_1,B_2$ are bounded self-adjoint operators in $\mathcal{H}$ such that $B_1 B_2$ is compact,
then $0\notin\sigma_e(B_1)\Rightarrow B_2$ is compact.
Hence, if $0\notin\sigma_e(A_1)$, then $A_2$ is compact, and, consequently, $\sigma_e(A_2)=\{0\}$.

Thus, the following question arises naturally: when $0\in\sigma_e(A)$?
We will give an answer to this question in terms of the projection valued spectral measures of $A_i$, $E_{A_i}(\cdot)$, $i=1,\ldots,N$.
Define the subspace
\begin{equation}\label{E:H_varepsilon}
\mathcal{H}_\varepsilon=\mathcal{H}_\varepsilon(A_1,\ldots,A_N)=\bigcap_{i=1}^N E_{A_i}([-\varepsilon,\varepsilon])\mathcal{H},\quad
\varepsilon\geqslant 0.
\end{equation}

\begin{main}\label{T:main_theorem}
$0\in\sigma_{e}(A)$ if and only if
the subspace $\mathcal{H}_\varepsilon$ is infinite dimensional for any $\varepsilon>0$.
\end{main}

\subsection{On the closedness of the sum of operator ranges}

The problem on the closedness of the sum of operator ranges
(criteria for the sum of operator ranges to be closed,
properties of collections of operators $X_i$ such that $\sum_{i=1}^n Ran(X_i)$ is closed),
in particular,
the problem on the closedness of the sum of $n$ subspaces of a Hilbert space
(criteria for the sum of $n$ subspaces of a Hilbert space to be closed,
properties of collections of subspaces with closed sum),
is an important problem of functional analysis.
This problem was studied in numerous publications and has many applications in various branches of mathematics,
see, for example, \cite{Feshchenko} and the bibliography therein
(unfortunately, English translation of the Russian original of this paper in some places is bad.
For example, the word "closeness" must be replaced by "closedness"),~\cite{Deutsch},~\cite{Badea}.

We consider the following question: when the sum of ranges of $A_i$,
\begin{equation*}
\sum_{i=1}^N Ran(A_i)=\left\{\sum_{i=1}^N y_i\mid y_i\in Ran(A_i)\right\}=\left\{\sum_{i=1}^N A_i x_i\mid x_i\in\mathcal{H}\right\},
\end{equation*}
is closed?
This question arises naturally in connection with the following result having applications in theoretical tomography
(see \cite{Lang} and the bibliography therein).

\begin{B}
Let $P_1,\ldots,P_N$ be orthogonal projections in $\mathcal{H}$.
Suppose $P_i P_j$ is compact for $i\neq j$.
Then $\sum_{i=1}^N Ran(P_i)$ is closed.
\end{B}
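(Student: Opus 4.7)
The plan is to deduce Theorem B from the Main Theorem by reducing the closedness of $\sum_{i=1}^N Ran(P_i)$ to the invertibility of the positive operator $A := \sum_{i=1}^N P_i$ on the subspace $V := (\ker A)^\perp$, and then applying the Main Theorem to the projections $P_i$ restricted to $V$.

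First I would record two basic identities. Since $\langle Ax, x\rangle = \sum_i \|P_i x\|^2$, we have $\ker A = \bigcap_{i=1}^N \ker P_i$, and therefore
\[
\overline{\sum_{i=1}^N Ran(P_i)} = \Bigl(\bigcap_{i=1}^N \ker P_i\Bigr)^\perp = V.
\]
A direct inclusion argument gives $Ran(A) \subseteq \sum_{i=1}^N Ran(P_i) \subseteq V$, so it suffices to prove $Ran(A) = V$: this will automatically force $\sum_{i=1}^N Ran(P_i) = Ran(A)$, which will then be closed.

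To prove $Ran(A) = V$, I would first check that $V$ is invariant under each $P_i$: for $v \in V$ and $w \in \ker A \subseteq \ker P_i$ one has $\langle P_i v, w\rangle = \langle v, P_i w\rangle = 0$. The restrictions $\tilde P_i := P_i|_V$ are then orthogonal projections on $V$ with $\tilde P_i \tilde P_j = (P_i P_j)|_V$ compact for $i \neq j$, so the Main Theorem applies to $(\tilde P_1, \ldots, \tilde P_N)$ on $V$, with sum $A|_V$. For $\varepsilon \in (0,1)$, $E_{\tilde P_i}([-\varepsilon,\varepsilon])$ is the projection of $V$ onto $\ker \tilde P_i = V \cap \ker P_i$, so
\[
\mathcal{H}_\varepsilon(\tilde P_1,\ldots,\tilde P_N) = V \cap \bigcap_{i=1}^N \ker P_i = V \cap \ker A = \{0\},
\]
which is finite dimensional. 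The Main Theorem therefore yields $0 \notin \sigma_e(A|_V)$. Combined with the fact that $A|_V$ is self-adjoint and injective (its kernel in $V$ being $V \cap \ker A = \{0\}$), together with the standard fact that an isolated point of the spectrum of a self-adjoint operator must be an eigenvalue, this forces $0 \notin \sigma(A|_V)$. Hence $A|_V : V \to V$ is bijective, so $Ran(A) = A(\ker A) + A(V) = V$, as desired.

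The main obstacle is setting up the reduction correctly: one has to exploit the positivity of the $P_i$ to obtain $\ker A = \bigcap_i \ker P_i$, which both identifies $V$ as the closure of $\sum_i Ran(P_i)$ and makes $V$ invariant under each $P_i$, thereby enabling the Main Theorem to be applied inside $V$. Once this reduction is in place, the remainder is a short spectral-theory argument.
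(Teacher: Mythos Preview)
Your proof is correct and follows essentially the same route as the paper's: the paper obtains Theorem~B as the special case $A_i=P_i$ of Theorem~\ref{T:criterion_sum_closed} (equivalently, of the Corollary to Theorem~\ref{T:sufficient_sum_closed}), whose proof restricts each $A_i$ to $\mathcal{K}=\mathcal{H}\ominus\mathcal{H}_0$ and applies the Main Theorem to the squares $B_i^2$ there to get $0\notin\sigma_e(\sum B_i^2)$, hence invertibility and closedness. You perform exactly the same restriction (your $V$ is the paper's $\mathcal{K}$) and apply the Main Theorem directly to the restricted projections $\tilde P_i$; since $P_i^2=P_i$, the two arguments are literally the same computation, only you bypass the packaging of the intermediate Theorem~\ref{T:criterion_sum_closed}.
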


\begin{remark}
Theorem B is stated only for Hilbert spaces, while
it was proved by Lars Svensson for reflexive Banach spaces~\cite{Svensson report},
by Harald Lang for Frechet spaces~\cite{Lang}, and
by Lars Svensson for Hausdorff locally compact topological vector spaces~\cite{Svensson article}.
\end{remark}

Using Main Theorem, we prove a criterion for $\sum_{i=1}^N Ran(A_i)$ to be closed.
Recall that the subspaces $\mathcal{H}_\varepsilon$, $\varepsilon\geqslant 0$, are defined by~\eqref{E:H_varepsilon}.
Clearly,
\begin{equation*}
\mathcal{H}_0=\bigcap_{i=1}^N Ker(A_i).
\end{equation*}
Note that $\mathcal{H}_0\subset\mathcal{H}_\varepsilon$ for any $\varepsilon>0$.

\begin{theorem}\label{T:criterion_sum_closed}
$\sum_{i=1}^N Ran(A_i)$ is closed if and only if $\mathcal{H}_\varepsilon=\mathcal{H}_0$ for some $\varepsilon>0$.
\end{theorem}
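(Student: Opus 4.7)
My plan is to translate the problem into one about the single operator $B := \sum_{i=1}^N A_i^2$. Introduce $T : \mathcal{H}^N \to \mathcal{H}$ by $T(x_1,\ldots,x_N) = \sum_i A_i x_i$; then $\operatorname{Ran}(T) = \sum_i \operatorname{Ran}(A_i)$ and $TT^* = B$, and by the classical fact that $\operatorname{Ran}(T)$ is closed iff $\operatorname{Ran}(TT^*)$ is closed, the theorem reduces to: $\operatorname{Ran}(B)$ is closed iff $\mathcal{H}_\varepsilon = \mathcal{H}_0$ for some $\varepsilon > 0$. Note that $B$ is bounded, self-adjoint, and non-negative, with $\operatorname{Ker}(B) = \bigcap_i \operatorname{Ker}(A_i) = \mathcal{H}_0$, so $\operatorname{Ran}(B)$ is closed precisely when $B|_{\mathcal{H}_0^\perp}$ is bounded below by some $c > 0$.

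For the ``closed $\Rightarrow \mathcal{H}_\varepsilon = \mathcal{H}_0$'' direction I would use a direct spectral estimate. Pick $\varepsilon \in (0, \sqrt{c/N})$ and any $y \in \mathcal{H}_\varepsilon$. The inclusion $\mathcal{H}_0 \subseteq \mathcal{H}_\varepsilon$ lets me split $y = P_{\mathcal{H}_0} y + y_1$ with $y_1 \in \mathcal{H}_\varepsilon \cap \mathcal{H}_0^\perp$, and the two-sided bound
\[
c \|y_1\|^2 \leq \langle B y_1, y_1 \rangle = \sum_{i=1}^N \|A_i y_1\|^2 \leq N \varepsilon^2 \|y_1\|^2
\]
forces $y_1 = 0$, yielding $\mathcal{H}_\varepsilon \subseteq \mathcal{H}_0$. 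This direction relies on neither the compactness hypothesis nor the Main Theorem.

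For the converse I would reduce to $\mathcal{K} := \mathcal{H}_0^\perp$ by setting $\tilde A_i := A_i|_\mathcal{K}$ (well defined because $A_i$ vanishes on $\mathcal{H}_0$ and is self-adjoint, hence preserves $\mathcal{K}$). The $\tilde A_i$ are self-adjoint on $\mathcal{K}$ with $\tilde A_i \tilde A_j$ compact; the block decomposition $A_i = 0 \oplus \tilde A_i$ on $\mathcal{H}_0 \oplus \mathcal{K}$ yields via functional calculus $\mathcal{H}_\varepsilon = \mathcal{H}_0 \oplus \tilde{\mathcal{H}}_\varepsilon$, so the hypothesis becomes $\tilde{\mathcal{H}}_\varepsilon = \{0\}$, and $\bigcap_i \operatorname{Ker}(\tilde A_i) = \mathcal{H}_0 \cap \mathcal{K} = \{0\}$ makes $\tilde B := \sum_i \tilde A_i^2$ injective on $\mathcal{K}$. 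I would then apply the Main Theorem to the collection $\{\tilde A_i^2\}_{i=1}^N$ (whose pairwise products $\tilde A_i^2 \tilde A_j^2 = \tilde A_i (\tilde A_i \tilde A_j) \tilde A_j$ remain compact): using $\bigcap_i E_{\tilde A_i^2}([-\delta,\delta])\mathcal{K} = \tilde{\mathcal{H}}_{\sqrt\delta}$, finite-dimensionality at $\delta = \varepsilon^2$ gives $0 \notin \sigma_e(\tilde B)$, which combined with injectivity of $\tilde B$ promotes to $0 \notin \sigma(\tilde B)$. Thus $\tilde B$ is invertible, $\operatorname{Ran}(\tilde B) = \mathcal{K}$, and consequently $\operatorname{Ran}(\tilde T) = \sum_i \operatorname{Ran}(\tilde A_i) = \sum_i \operatorname{Ran}(A_i)$ is closed.

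The main obstacle lies in the converse: the forward direction's quantitative estimate has no analog in the other direction, because $\sum_i \|A_i y\|^2$ being small does not localize the spectral support of $y$ into $[-\varepsilon,\varepsilon]$ for each $A_i$ (operator-norm smallness of $A_i y$ is genuinely weaker than spectral confinement of $y$ near $0$). The Main Theorem bridges this gap only at the level of the essential spectrum, which is why the reduction to $\mathcal{H}_0^\perp$ is indispensable: it strips away the trivial kernel contribution to $B$ so that the injectivity of $\tilde B$ can upgrade ``$0 \notin \sigma_e(\tilde B)$'' to ``$0 \notin \sigma(\tilde B)$.''
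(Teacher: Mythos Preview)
Your proposal is correct and follows essentially the same route as the paper: both directions hinge on the operator $\sum_i A_i^2$ (your $B$, the paper's $\sum_i B_i^2$ after restricting to $\mathcal{H}_0^\perp$), with the forward direction proved by the elementary estimate $\sum_i\|A_i y\|^2\leqslant N\varepsilon^2\|y\|^2$ for $y\in\mathcal{H}_\varepsilon$, and the converse by applying the Main Theorem to the squares $\tilde A_i^2$ on $\mathcal{H}_0^\perp$ to get $0\notin\sigma_e(\tilde B)$, then upgrading via injectivity to $0\notin\sigma(\tilde B)$. Your explicit verification that $\tilde A_i^2\tilde A_j^2=\tilde A_i(\tilde A_i\tilde A_j)\tilde A_j$ is compact is a point the paper leaves implicit.
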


\begin{remark}
We will prove that if $\mathcal{H}_\varepsilon\ominus\mathcal{H}_0$ is finite dimensional for some $\varepsilon>0$, then
$\sum_{i=1}^N Ran(A_i)$ is closed.
\end{remark}

\begin{remark}
If $\sum_{i=1}^N Ran(A_i)$ is closed, then $\sum_{i=1}^N Ran(A_i)=\mathcal{H}\ominus\mathcal{H}_0$.
Indeed, suppose $\sum_{i=1}^N Ran(A_i)$ is closed.
We have
\begin{equation*}
\mathcal{H}\ominus\left(\sum_{i=1}^N Ran(A_i)\right)=\bigcap_{i=1}^N (\mathcal{H}\ominus Ran(A_i))=\bigcap_{i=1}^N Ker(A_i)=\mathcal{H}_0.
\end{equation*}
Hence, $\sum_{i=1}^N Ran(A_i)=\mathcal{H}\ominus\mathcal{H}_0$.
\end{remark}

Using this criterion for $\sum_{i=1}^N Ran(A_i)$ to be closed, we get the following generalization of Theorem B.

\begin{theorem}\label{T:sufficient_sum_closed}
Let $A_{p,i}:\mathcal{K}_{p,i}\to\mathcal{H}$, $i=1,\ldots,N_p$, $p=1,\ldots,m$ be bounded linear operators
(here $\mathcal{K}_{p,i}$ are Hilbert spaces, $m,N_1,\ldots,N_m$ are natural numbers)
such that $A_{p,i}^* A_{q,j}$ is compact for any $p\neq q$, $i=1,\ldots,N_p$, $j=1,\ldots,N_q$.
If $\mathcal{R}_p=\sum_{i=1}^{N_p}Ran(A_{p,i})$ is closed for any $p=1,\ldots,m$, then
$\sum_{p=1}^m \mathcal{R}_p$ is closed.
\end{theorem}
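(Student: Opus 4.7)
The plan is to reduce the statement to Theorem~\ref{T:criterion_sum_closed} applied to a single family of bounded self-adjoint operators on $\mathcal{H}$, one per group $p$. For each $p=1,\ldots,m$ I would set
\[
B_p:=\sum_{i=1}^{N_p}A_{p,i}A_{p,i}^*,
\]
a bounded positive self-adjoint operator on $\mathcal{H}$. The point is to read $B_p$ as $\tilde{A}_p\tilde{A}_p^*$, where $\tilde{A}_p:\bigoplus_{i=1}^{N_p}\mathcal{K}_{p,i}\to\mathcal{H}$ aggregates the $A_{p,i}$, so that $Ran(\tilde{A}_p)=\mathcal{R}_p$ and $Ker(B_p)=Ker(\tilde{A}_p^*)=\mathcal{R}_p^\perp$. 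Since $\mathcal{R}_p$ is closed by hypothesis, the standard equivalence $Ran(TT^*)=Ran(T)$ (valid whenever $Ran(T)$ is closed) then gives $Ran(B_p)=\mathcal{R}_p$, and hence $\sum_p\mathcal{R}_p=\sum_p Ran(B_p)$.

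Next I would verify the compactness hypothesis of Theorem~\ref{T:criterion_sum_closed} for the family $B_1,\ldots,B_m$. For $p\neq q$, expansion yields
\[
B_pB_q=\sum_{i,j}A_{p,i}\bigl(A_{p,i}^*A_{q,j}\bigr)A_{q,j}^*,
\]
and each summand is compact because $A_{p,i}^*A_{q,j}$ is compact by assumption, so $B_pB_q$ is compact.

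The remaining point is to translate closedness of each individual $Ran(B_p)$ into a uniform spectral statement. Since $B_p$ is self-adjoint with closed range, $0$ is either absent from $\sigma(B_p)$ or an isolated point of it; in either case there exists $\varepsilon_p>0$ with $E_{B_p}([-\varepsilon_p,\varepsilon_p])\mathcal{H}=Ker(B_p)$. Setting $\varepsilon:=\min_{p}\varepsilon_p>0$ forces
\[
\bigcap_{p=1}^{m}E_{B_p}([-\varepsilon,\varepsilon])\mathcal{H}=\bigcap_{p=1}^{m}Ker(B_p),
\]
i.e.\ $\mathcal{H}_\varepsilon(B_1,\ldots,B_m)=\mathcal{H}_0(B_1,\ldots,B_m)$, and Theorem~\ref{T:criterion_sum_closed} delivers closedness of $\sum_p Ran(B_p)=\sum_p\mathcal{R}_p$.

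The only real obstacle I foresee is the repackaging step at the start: one must notice that a closed sum of ranges of general bounded operators can be rewritten as the range of a single \emph{self-adjoint} operator which still inherits the cross-compactness condition. Once this reduction is in hand, the spectral-gap argument and the appeal to Theorem~\ref{T:criterion_sum_closed} are essentially routine.
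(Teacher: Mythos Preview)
Your proposal is correct and follows essentially the same route as the paper: define $B_p=\sum_i A_{p,i}A_{p,i}^*$, use the closedness of $\mathcal{R}_p$ to get $Ran(B_p)=\mathcal{R}_p$ and a spectral gap at $0$ for each $B_p$, check that $B_pB_q$ is compact for $p\neq q$, and then invoke Theorem~\ref{T:criterion_sum_closed}. The only cosmetic differences are that the paper packages the identity $Ran(B_p)=\mathcal{R}_p$ as its Corollary~\ref{C:sum operator ranges} and picks a single $\varepsilon$ up front via Proposition~\ref{P:operator range} rather than taking $\min_p\varepsilon_p$.
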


\begin{corollary}
If $Ran(A_i)$ is closed for $i=1,\ldots,N$, then $\sum_{i=1}^N Ran(A_i)$ is closed.
\end{corollary}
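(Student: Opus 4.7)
The plan is to reduce the corollary directly to Theorem \ref{T:criterion_sum_closed} by showing that closed range of each self-adjoint $A_i$ forces $\mathcal{H}_\varepsilon = \mathcal{H}_0$ for some small $\varepsilon > 0$. The bridge is a standard spectral-theoretic fact: a bounded self-adjoint operator $B$ has closed range if and only if $0$ is not an accumulation point of $\sigma(B)$, equivalently, there exists $\delta > 0$ such that $E_B((-\delta,\delta)\setminus\{0\}) = 0$, equivalently, $E_B([-\delta,\delta])\mathcal{H} = \ker(B)$.

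Assuming this, the proof is essentially one line. For each $i=1,\ldots,N$, since $A_i$ is self-adjoint with closed range, choose $\delta_i > 0$ with $E_{A_i}([-\delta_i,\delta_i])\mathcal{H} = \ker(A_i)$. Set $\varepsilon = \min_{1\leqslant i \leqslant N} \delta_i > 0$. Since $[-\varepsilon,\varepsilon] \subseteq [-\delta_i,\delta_i]$, we still have $E_{A_i}([-\varepsilon,\varepsilon])\mathcal{H} = \ker(A_i)$ for every $i$, hence
\begin{equation*}
\mathcal{H}_\varepsilon = \bigcap_{i=1}^N E_{A_i}([-\varepsilon,\varepsilon])\mathcal{H} = \bigcap_{i=1}^N \ker(A_i) = \mathcal{H}_0.
\end{equation*}
Theorem \ref{T:criterion_sum_closed} then immediately gives that $\sum_{i=1}^N \mathrm{Ran}(A_i)$ is closed.

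The only step requiring any care is the spectral-theoretic characterization of closed range for self-adjoint operators, which I would justify by decomposing $\mathcal{H} = \ker(A_i) \oplus \overline{\mathrm{Ran}(A_i)}$ and noting that $\mathrm{Ran}(A_i)$ is closed precisely when $A_i$ restricted to $\overline{\mathrm{Ran}(A_i)}$ is boundedly invertible, i.e.\ when $0$ lies outside the spectrum of that restriction. There is no real obstacle here: the compactness hypothesis $A_iA_j$ compact for $i\neq j$ is not even needed for the corollary itself (it is used only through the invocation of Theorem \ref{T:criterion_sum_closed}, which is what makes the overall framework work).
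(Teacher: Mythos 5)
Your proof is correct, but it reaches the corollary by a different route than the paper does. The paper obtains the corollary as an immediate special case of Theorem~\ref{T:sufficient_sum_closed}: take $m=N$, $N_p=1$, $\mathcal{K}_{p,1}=\mathcal{H}$ and $A_{p,1}=A_p$, so that $A_{p,1}^*A_{q,1}=A_pA_q$ is compact for $p\neq q$ and each $\mathcal{R}_p=Ran(A_p)$ is closed by hypothesis. You instead bypass Theorem~\ref{T:sufficient_sum_closed} entirely and apply Theorem~\ref{T:criterion_sum_closed} directly: closed range of the self-adjoint $A_i$ gives, via the spectral characterization (this is exactly Proposition~\ref{P:operator range} of the paper, so you may cite it rather than reprove it), that $E_{A_i}([-\delta_i,\delta_i])\mathcal{H}=Ker(A_i)$ for small $\delta_i$, hence $\mathcal{H}_\varepsilon=\bigcap_i Ker(A_i)=\mathcal{H}_0$ for $\varepsilon=\min_i\delta_i$. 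This is in substance the proof of Theorem~\ref{T:sufficient_sum_closed} specialized to the case $N_p=1$, applied to the $A_i$ themselves rather than to the auxiliary operators $B_p=\sum_i A_{p,i}A_{p,i}^*$; what your route buys is that it avoids Corollary~\ref{C:sum operator ranges} and the extra layer of Theorem~\ref{T:sufficient_sum_closed} altogether, at the cost of not covering the more general statement. Your closing remark is also accurate as intended: the compactness of $A_iA_j$ plays no role in the reduction itself, but it is of course essential for the corollary to be true at all, and enters precisely through the hypotheses of Theorem~\ref{T:criterion_sum_closed}.
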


Note that our proof of Theorem~\ref{T:sufficient_sum_closed} is not a generalization of the proof of Theorem B
(and we don't use Theorem B in the proof of Theorem~\ref{T:sufficient_sum_closed}).

\subsection{Notation}
In this paper we consider only complex Hilbert spaces usually denoted by the letters~$\mathcal{H},\mathcal{K}$.
The scalar product in $\mathcal{H}$ is denoted by $\langle\cdot,\cdot\rangle$,
and $\|\cdot\|$ stands for the corresponding norm, $\|x\|^2=\langle x,x\rangle$.
The identity operator on~$\mathcal{H}$ is denoted by~$I_\mathcal{H}$ or simply $I$ if it is clear which Hilbert space is being considered.
For a bounded linear operator $X:\mathcal{H}\to\mathcal{H}$, $\sigma(X)$ denotes the spectrum of the operator $X$.

\section{Auxiliary results and notions}

\subsection{The essential spectrum of a self-adjoint operator}

Following~\cite[Chapter 9]{Birman}, we recall the definition and some properties of the essential spectrum of a self-adjoint operator.

Let $A$ be a bounded self-adjoint operator in a complex Hilbert space $\mathcal{H}$.
The essential spectrum of $A$, $\sigma_e(A)$, is the set of all $\lambda\in\sigma(A)$ such that
either $\lambda$ is a limit point of $\sigma(A)$ or $Ker(A-\lambda I)$ is infinite-dimensional.
The set $\sigma_d(A)=\sigma(A)\setminus\sigma_e(A)$ is called the discrete spectrum of $A$.
Clearly, $\lambda\in\sigma_d(A)$ if and only if $\lambda$ is an isolated point of $\sigma(A)$ and $Ker(A-\lambda I)$ is finite-dimensional.

It terms of the spectral measure $E_A(\cdot)$ the essential spectrum of $A$ can be characterized as follows:
$\lambda\in\sigma_e(A)$ if and only if $E_A((\lambda-\varepsilon,\lambda+\varepsilon))\mathcal{H}$ is infinite-dimensional for any $\varepsilon>0$.

More convenient for us is a description of $\sigma_e(A)$ in terms of singular sequences.
Recall that a sequence $\{x_k\mid k\geqslant 1\}$, $x_k\in\mathcal{H}$, $k\geqslant 1$,
is called a singular sequence for $A$ at the point $\lambda$ if
\begin{enumerate}
\item
$x_k\to 0$ weakly as $k\to\infty$;
\item
$x_k$ does not converge to $0$ as $k\to\infty$;
\item
$(A-\lambda I)x_k\to 0$ as $k\to\infty$.
\end{enumerate}
Let us denote by $sing(A,\lambda)$ the set of all sequences $\{x_k\mid k\geqslant 1\}$ which are singular for $A$ at $\lambda$.

\begin{proposition}
$\lambda\in\sigma_e(A)$ if and only if $sing(A,\lambda)\neq\varnothing$.
\end{proposition}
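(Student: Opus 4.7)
The plan is to prove the two directions separately, exploiting the spectral-measure characterisation of $\sigma_e(A)$ recalled just above the statement: $\lambda\in\sigma_e(A)$ iff $E_A((\lambda-\varepsilon,\lambda+\varepsilon))\mathcal{H}$ is infinite-dimensional for every $\varepsilon>0$.

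For the necessity direction, I would construct a singular sequence by exhausting the spectral subspaces. Fix $\lambda\in\sigma_e(A)$. For each $k\geqslant 1$, set $\mathcal{L}_k=E_A((\lambda-1/k,\lambda+1/k))\mathcal{H}$; by hypothesis each $\mathcal{L}_k$ is infinite-dimensional. Pick inductively a unit vector $x_k\in\mathcal{L}_k$ with $x_k\perp x_1,\ldots,x_{k-1}$; this is possible since a finite-codimension restriction of an infinite-dimensional space is still infinite-dimensional. The resulting orthonormal sequence converges weakly to zero (Bessel's inequality) and has $\|x_k\|=1$, so conditions (1) and (2) of the singular-sequence definition hold. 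Condition (3) follows from the spectral theorem:
\begin{equation*}
\|(A-\lambda I)x_k\|^2=\int_{\lambda-1/k}^{\lambda+1/k}(t-\lambda)^2\, d\langle E_A(t)x_k,x_k\rangle\leqslant \frac{1}{k^2}\|x_k\|^2=\frac{1}{k^2}.
\end{equation*}

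For the sufficiency direction, I would argue by contraposition: assuming $\lambda\notin\sigma_e(A)$, I show that every sequence satisfying (1) and (3) must converge to $0$ in norm, contradicting (2). By the spectral characterisation, there is $\varepsilon>0$ with $P:=E_A((\lambda-\varepsilon,\lambda+\varepsilon))$ of finite rank. Given any sequence $\{x_k\}$ with $x_k\to 0$ weakly and $(A-\lambda I)x_k\to 0$, decompose $x_k=Px_k+(I-P)x_k$. Since $P$ has finite-dimensional range (hence is compact), $Px_k\to 0$ in norm. On the complementary subspace $(I-P)\mathcal{H}$, the spectral theorem gives the lower bound $\|(A-\lambda I)y\|\geqslant \varepsilon\|y\|$ for $y\in(I-P)\mathcal{H}$; applying this to $y=(I-P)x_k$ and using that $P$ commutes with $A-\lambda I$ yields
\begin{equation*}
\|(I-P)x_k\|\leqslant \frac{1}{\varepsilon}\|(A-\lambda I)(I-P)x_k\|=\frac{1}{\varepsilon}\|(I-P)(A-\lambda I)x_k\|\leqslant \frac{1}{\varepsilon}\|(A-\lambda I)x_k\|\to 0.
\end{equation*}
Thus $x_k\to 0$ in norm.

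I do not anticipate a genuine obstacle: the argument is a textbook application of the spectral theorem, and the only delicate points are the inductive orthonormalisation in the first half (which needs the infinite-dimensionality of $\mathcal{L}_k$) and the reduction-of-norm inequality on $(I-P)\mathcal{H}$ in the second half, both of which are routine. Neither step requires any input beyond the characterisation of $\sigma_e(A)$ via the spectral measure already stated in the excerpt.
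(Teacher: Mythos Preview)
Your argument is correct and is the standard textbook proof of Weyl's criterion. Note, however, that the paper does not supply its own proof of this proposition: it is simply quoted as a known fact from \cite{Birman}, so there is nothing to compare against.
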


\subsection{On the sum of operator ranges}

\begin{proposition}\label{P:sum_operator_ranges}
Let $\mathcal{K}_1,\ldots,\mathcal{K}_n$ and $\mathcal{H}$ be Hilbert spaces,
$B_i:\mathcal{K}_i\to\mathcal{H}$ a bounded linear operators, $i=1,\ldots,n$.
If $\sum_{i=1}^n Ran(B_i)=\mathcal{H}$, then
$\sum_{i=1}^n B_i B_i^*\geqslant\varepsilon I$ for some $\varepsilon>0$.
\end{proposition}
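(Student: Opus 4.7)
The plan is to assemble the individual operators $B_i$ into a single bounded operator from a direct sum Hilbert space into $\mathcal{H}$, and then exploit the standard equivalence (via the open mapping theorem) between surjectivity of a bounded operator and the property that its adjoint is bounded below.

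First, I would form the Hilbert space direct sum $\mathcal{K}=\mathcal{K}_1\oplus\cdots\oplus\mathcal{K}_n$ and define the operator $B:\mathcal{K}\to\mathcal{H}$ by $B(x_1,\ldots,x_n)=\sum_{i=1}^{n}B_i x_i$. This is bounded, and its range equals $\sum_{i=1}^{n}Ran(B_i)$, which by hypothesis is all of $\mathcal{H}$. A direct computation identifies the adjoint as $B^*y=(B_1^*y,\ldots,B_n^*y)$; consequently
\begin{equation*}
BB^*y=\sum_{i=1}^{n}B_iB_i^*y,\qquad \langle BB^*y,y\rangle=\|B^*y\|^2=\sum_{i=1}^{n}\|B_i^*y\|^2.
\end{equation*}

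Next, I would invoke the classical fact that a bounded linear operator $B$ between Hilbert spaces is surjective if and only if its adjoint is bounded below, i.e., there exists $c>0$ such that $\|B^*y\|\geqslant c\|y\|$ for every $y\in\mathcal{H}$. This is a direct consequence of the open mapping theorem applied to $B$ (combined with the identification $Ker(B^*)=Ran(B)^\perp=\{0\}$, which ensures injectivity of $B^*$). Since $B$ is surjective by the previous paragraph, such a constant $c>0$ exists.

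Combining the two steps yields
\begin{equation*}
\Big\langle\Big(\sum_{i=1}^{n}B_iB_i^*\Big)y,y\Big\rangle=\|B^*y\|^2\geqslant c^2\|y\|^2
\end{equation*}
for every $y\in\mathcal{H}$, which is precisely $\sum_{i=1}^{n}B_iB_i^*\geqslant\varepsilon I$ with $\varepsilon=c^2$. The only nontrivial ingredient is the invocation of the open mapping theorem (or equivalently, the closed range theorem); the rest is bookkeeping with direct sums and adjoints, so I do not expect any serious obstacle.
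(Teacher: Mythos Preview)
Your proposal is correct and follows essentially the same approach as the paper: both assemble the $B_i$ into a single operator $B$ on the direct sum $\mathcal{K}_1\oplus\cdots\oplus\mathcal{K}_n$, use surjectivity of $B$ to conclude that $B^*$ is bounded below, and then unwind $\|B^*y\|^2=\langle(\sum_i B_iB_i^*)y,y\rangle$ to obtain the desired inequality with $\varepsilon=c^2$. The only cosmetic difference is that you make the appeal to the open mapping/closed range theorem explicit, whereas the paper simply asserts that $B^*$ is an isomorphic embedding.
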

\begin{proof}
Define an operator $B:\mathcal{K}_1\oplus\ldots\oplus\mathcal{K}_n\to\mathcal{H}$ by
\begin{equation*}
B(x_1,\ldots,x_n)=\sum_{i=1}^n B_i x_i,\quad x_i\in\mathcal{K}_i,\quad i=1,\ldots,n.
\end{equation*}
Then $B^*:\mathcal{H}\to\mathcal{K}_1\oplus\ldots\oplus\mathcal{K}_n$ and
\begin{equation*}
B^*x=(B_1^*x,\ldots,B_n^*x),\quad x\in\mathcal{H}.
\end{equation*}
Since $Ran(B)=\sum_{i=1}^n Ran(B_i)=\mathcal{H}$, we conclude that $B^*$ is an isomorphic embedding, that is,
there exists $c>0$ such that $\|B^*x\|\geqslant c\|x\|$, $x\in\mathcal{H}$.
We have
\begin{equation*}
\|B^*x\|^2=\sum_{i=1}^n\|B_i^*x\|^2=\sum_{i=1}^n\langle B_i B_i^*x,x\rangle=\left\langle\left(\sum_{i=1}^n B_i B_i^*\right)x,x\right\rangle.
\end{equation*}
Hence, $\sum_{i=1}^n B_i B_i^*\geqslant c^2 I$.
\end{proof}

We will need a simple corollary of Proposition~\ref{P:sum_operator_ranges}.

\begin{corollary}\label{C:sum operator ranges}
Let $\mathcal{K}_1,\ldots,\mathcal{K}_n$ and $\mathcal{H}$ be Hilbert spaces,
$B_i:\mathcal{K}_i\to\mathcal{H}$ a bounded linear operators, $i=1,\ldots,n$.
If $\sum_{i=1}^n Ran(B_i)$ is closed, then
\begin{equation*}
\sum_{i=1}^n Ran(B_i)=Ran\left(\sum_{i=1}^n B_i B_i^*\right).
\end{equation*}
\end{corollary}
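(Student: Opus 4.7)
The plan is to set $\mathcal{R}=\sum_{i=1}^n Ran(B_i)$ and $T=\sum_{i=1}^n B_iB_i^*$, and show both inclusions $Ran(T)\subseteq\mathcal{R}$ and $\mathcal{R}\subseteq Ran(T)$ by exploiting the Hilbert space structure on the closed subspace $\mathcal{R}$.

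First I would handle the easy direction: since $Ran(B_iB_i^*)\subseteq Ran(B_i)\subseteq\mathcal{R}$ for each $i$, we immediately obtain $Ran(T)\subseteq\mathcal{R}$.

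For the reverse inclusion, the key is to reduce to a surjectivity statement on the Hilbert space $\mathcal{R}$ itself, so that Proposition~\ref{P:sum_operator_ranges} becomes applicable. I would first compute the kernel of $T$: since $T$ is a sum of positive operators, $Tx=0$ iff $\langle Tx,x\rangle=\sum_i\|B_i^*x\|^2=0$, hence $Ker(T)=\bigcap_{i=1}^n Ker(B_i^*)=\bigcap_{i=1}^n (Ran(B_i))^\perp = \mathcal{R}^\perp$ (using that $\mathcal{R}$ is closed). Consequently $T$ leaves $\mathcal{R}$ invariant and its restriction $T|_\mathcal{R}:\mathcal{R}\to\mathcal{R}$ is well defined.

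Next, I would view each $B_i$ as a bounded operator $\widetilde{B}_i:\mathcal{K}_i\to\mathcal{R}$. Since $Ran(B_i)\subseteq\mathcal{R}$, this is legitimate, and now $\sum_{i=1}^n Ran(\widetilde{B}_i)=\mathcal{R}$, i.e.\ the sum of ranges fills the whole (new) ambient space. A direct check from the definition of adjoint shows that $\widetilde{B}_i^*$ equals the restriction $B_i^*|_\mathcal{R}$, and hence $\widetilde{B}_i\widetilde{B}_i^* = (B_iB_i^*)|_\mathcal{R}$ as operators on $\mathcal{R}$. Applying Proposition~\ref{P:sum_operator_ranges} in $\mathcal{R}$, we get $T|_\mathcal{R}=\sum_{i=1}^n\widetilde{B}_i\widetilde{B}_i^*\geqslant\varepsilon I_\mathcal{R}$ for some $\varepsilon>0$, so $T|_\mathcal{R}$ is invertible on $\mathcal{R}$, which yields $\mathcal{R}=Ran(T|_\mathcal{R})\subseteq Ran(T)$.

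The only potentially subtle point is the identification $\widetilde{B}_i^* = B_i^*|_\mathcal{R}$, but this is a routine consequence of $\langle\widetilde{B}_i x,y\rangle_\mathcal{R}=\langle B_i x,y\rangle_\mathcal{H}$ for $x\in\mathcal{K}_i$, $y\in\mathcal{R}$. The conceptual core of the argument is simply recognizing that once $\mathcal{R}$ is closed, the hypothesis of the Proposition is met after restricting the codomain; everything else is bookkeeping.
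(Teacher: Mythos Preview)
Your argument is correct and is exactly the intended derivation: the paper does not spell out a proof but labels the result a ``simple corollary'' of Proposition~\ref{P:sum_operator_ranges}, and your reduction---restricting the codomain to the closed subspace $\mathcal{R}$ so that the hypothesis of the Proposition is met, then reading off invertibility of $T|_\mathcal{R}$---is precisely how that corollary is obtained. The bookkeeping you flag (the identification $\widetilde{B}_i^*=B_i^*|_\mathcal{R}$) is handled correctly.
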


The following proposition is well-known.

\begin{proposition}\label{P:operator range}
Let $B$ be a bounded self-adjoint operator in $\mathcal{H}$.
$Ran(B)$ is closed if and only if $\sigma(B)\cap((-\varepsilon,0)\cup(0,\varepsilon))=\varnothing$ for some $\varepsilon>0$.
\end{proposition}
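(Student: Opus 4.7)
My plan is to prove the two directions separately, using the spectral theorem as the main tool.

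For the forward implication, I would argue as follows. Suppose $Ran(B)$ is closed. Since $B$ is self-adjoint, $(\ker B)^\perp = \overline{Ran(B)} = Ran(B)$, and $B$ maps this subspace into itself. Restricting $B$ to $(\ker B)^\perp$ gives a bounded, self-adjoint, injective operator $B_1$ whose range equals $(\ker B)^\perp$; by the open mapping theorem $B_1$ has a bounded inverse, so there exists $c>0$ with $\|B_1 x\|\geqslant c\|x\|$ on $(\ker B)^\perp$. Taking any $\varepsilon\in(0,c)$, the spectrum of $B_1$ avoids $(-\varepsilon,\varepsilon)$. Since $\sigma(B)\subseteq\sigma(B_1)\cup\{0\}$, we conclude $\sigma(B)\cap((-\varepsilon,0)\cup(0,\varepsilon))=\varnothing$.

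For the reverse implication, I would use the spectral projections associated with $B$. Set $P_0=E_B(\{0\})$ and $P_1=I-P_0=E_B(\mathbb{R}\setminus\{0\})$; both $P_0\mathcal{H}$ and $P_1\mathcal{H}$ are invariant under $B$. On $P_0\mathcal{H}$ we have $B=0$, so this subspace contributes nothing to $Ran(B)$. On $P_1\mathcal{H}$, the spectrum of $B|_{P_1\mathcal{H}}$ is contained in $\sigma(B)\setminus\{0\}\subseteq(-\infty,-\varepsilon]\cup[\varepsilon,+\infty)$, hence $B|_{P_1\mathcal{H}}$ is invertible on $P_1\mathcal{H}$ and maps it onto itself. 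Therefore $Ran(B)=P_1\mathcal{H}$, which is closed as the range of an orthogonal projection.

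I expect no serious obstacle here; the statement is essentially folklore and follows cleanly from the spectral theorem. The only care required is in justifying the identification $(\ker B)^\perp=\overline{Ran(B)}$ for self-adjoint operators in the first direction, and in verifying that $B|_{P_1\mathcal{H}}$ is truly onto (which follows from the functional calculus, since the function $\lambda\mapsto\lambda^{-1}$ is bounded on the spectrum of $B|_{P_1\mathcal{H}}$, providing an inverse explicitly).
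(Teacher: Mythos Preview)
Your argument is correct in both directions and is exactly the standard spectral-theorem proof one would expect. The paper itself omits the proof entirely, writing only ``The proof is trivial and is omitted,'' so there is no approach to compare against; your write-up supplies precisely the routine justification the author chose not to include.
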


The proof is trivial and is omitted.

\section{Proof of Theorem A}\label{S:proof_theorem_A}

To prove Theorem A, we need the following simple lemma
which shows relation between singular sequences of two bounded self-adjoint operators whose product is compact.

\begin{lemma}\label{L:lemma_A}
Let $B_1,B_2$ be bounded self-adjoint operators in $\mathcal{H}$.
Suppose $B_1 B_2$ is compact.
If $\{x_k\mid k\geqslant 1\}\in sing(B_2,\lambda)$, where $\lambda\neq 0$, then $\{x_k\mid k\geqslant 1\}\in sing(B_1,0)$.
\end{lemma}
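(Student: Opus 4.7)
The plan is to verify directly the three defining properties of a singular sequence for $B_1$ at $0$. The first two (weak convergence to $0$ and nonconvergence in norm) are inherited immediately from the assumption $\{x_k\}\in sing(B_2,\lambda)$, so the only real task is to show $B_1 x_k\to 0$ as $k\to\infty$.

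For this, I would exploit the hypothesis $\lambda\neq 0$ to express $x_k$ in terms of $B_2 x_k$ up to a vanishing error. Concretely, from $(B_2-\lambda I)x_k\to 0$ we have
\begin{equation*}
x_k=\frac{1}{\lambda}B_2 x_k-\frac{1}{\lambda}(B_2-\lambda I)x_k,
\end{equation*}
and applying $B_1$ yields
\begin{equation*}
B_1 x_k=\frac{1}{\lambda}(B_1 B_2)x_k-\frac{1}{\lambda}B_1(B_2-\lambda I)x_k.
\end{equation*}
The second term tends to $0$ in norm because $B_1$ is bounded and $(B_2-\lambda I)x_k\to 0$. The first term tends to $0$ because $B_1 B_2$ is compact and $x_k\to 0$ weakly, so $(B_1 B_2)x_k\to 0$ in norm. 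This gives $B_1 x_k\to 0$, completing the verification.

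There is essentially no obstacle; the only thing to notice is that the argument genuinely requires $\lambda\neq 0$ in order to solve for $x_k$ in terms of $B_2 x_k$. The conclusion is then immediate from the characterization of singular sequences.
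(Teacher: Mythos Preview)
Your proof is correct and follows essentially the same approach as the paper: both apply $B_1$ to the relation $(B_2-\lambda I)x_k\to 0$, use compactness of $B_1 B_2$ together with weak convergence of $x_k$ to kill the $B_1 B_2 x_k$ term, and then divide by $\lambda\neq 0$ to conclude $B_1 x_k\to 0$. The only cosmetic difference is that you solve for $x_k$ first and then apply $B_1$, whereas the paper applies $B_1$ directly and divides by $\lambda$ at the end.
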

\begin{proof}
We have $(B_2-\lambda I)x_k\to 0$, $k\to\infty$.
It follows that $B_1(B_2-\lambda I)x_k\to 0$, $k\to\infty$, that is, $B_1 B_2x_k-\lambda B_1 x_k\to 0$, $k\to\infty$.
Since $B_1 B_2$ is compact and $x_k\to 0$ weakly, we conclude that $B_1 B_2 x_k\to 0$.
Hence, $\lambda B_1 x_k\to 0$, $k\to\infty$, whence, $B_1 x_k\to 0$, $k\to\infty$.
Therefore, $\{x_k\mid k\geqslant 1\}\in sing(B_1,0)$.
\end{proof}

\begin{proof}[Proof of Theorem A]
\textbf{1.}
Suppose that $\lambda\in\sigma_e(A_i)$ for some $i$, and $\lambda\neq 0$.
Let us show that $\lambda\in\sigma_e(A)$.
There exists a sequence $\{x_k\mid k\geqslant 1\}\in sing(A_i,\lambda)$.
By Lemma~\ref{L:lemma_A}, $\{x_k\mid k\geqslant 1\}\in sing(A_j,0)$ for $j\neq i$.
We have
\begin{equation*}
(A-\lambda I)x_k=(A_i-\lambda I)x_k+\sum_{j\neq i}A_j x_k\to 0,\quad k\to\infty.
\end{equation*}
Hence, $\{x_k\mid k\geqslant 1\}\in sing(A,\lambda)$, whence $\lambda\in\sigma_e(A)$.

\textbf{2.}
Suppose that $\lambda\in\sigma_e(A)$, $\lambda\neq 0$.
Let us prove that $\lambda\in\sigma_e(A_i)$ for some $i$.
There exists $\{x_k\mid k\geqslant 1\}\in sing(A,\lambda)$.
We have
\begin{equation}\label{E:convergence_A}
(A-\lambda I)x_k=\left(\sum_{j=1}^N A_j-\lambda I\right)x_k\to 0,\quad k\to\infty.
\end{equation}
Let $i\in\{1,\ldots,n\}$.
Then $A_i(\sum_{j=1}^N A_j-\lambda I)x_k\to 0$, $k\to\infty$, that is,
$\sum_{j=1}^N A_i A_j x_k-\lambda A_i x_k\to 0$, $k\to\infty$.
Since $A_i A_j$ is compact for $j\neq i$ and $x_k\to 0$ weakly, we conclude that $A_i A_j x_k\to 0$, $k\to\infty$ for $j\neq i$.
Hence, $A_i^2 x_k-\lambda A_i x_k\to 0$, $k\to\infty$, i.e., $(A_i-\lambda I)A_i x_k\to 0$, $k\to\infty$.

Suppose that there exists $i$ such that the sequence $A_i x_k$ does not converge to $0$ as $k\to\infty$.
Then $\{A_i x_k\mid k\geqslant 1\}\in sing(A_i,\lambda)$, whence $\lambda\in\sigma_e(A_i)$.

Now assume that $A_i x_k\to 0$, $k\to\infty$ for any $i=1,\ldots,N$.
From~\eqref{E:convergence_A} it follows that $\lambda x_k\to 0$, $k\to\infty$.
Hence, $x_k\to 0$, $k\to\infty$, a contradiction.

The proof is complete.
\end{proof}

\section{Proof of Main Theorem}

To prove Main Theorem, we need the following two lemmas.

\begin{lemma}\label{L:compact_product}
Let $B,C$ be bounded self-adjoint operators in $\mathcal{H}$.
If $BC$ is compact, then
\begin{equation*}
E_B(\mathbb{R}\setminus[-\varepsilon,\varepsilon])E_C(\mathbb{R}\setminus[-\delta,\delta])
\end{equation*}
is compact for any $\varepsilon, \delta>0$.
\end{lemma}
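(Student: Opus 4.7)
The plan is to ``invert'' $B$ on the range of $E_B(\mathbb{R}\setminus[-\varepsilon,\varepsilon])$ and $C$ on the range of $E_C(\mathbb{R}\setminus[-\delta,\delta])$ by means of the bounded Borel functional calculus, so that the desired product becomes $(\text{bounded})\cdot (BC)\cdot (\text{bounded})$, which is compact because $BC$ is.

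Concretely, I would introduce the bounded Borel functions
\begin{equation*}
f_\varepsilon(t)=\frac{1}{t}\chi_{\mathbb{R}\setminus[-\varepsilon,\varepsilon]}(t),\qquad g_\delta(s)=\frac{1}{s}\chi_{\mathbb{R}\setminus[-\delta,\delta]}(s),
\end{equation*}
both of which satisfy $\|f_\varepsilon\|_\infty\leqslant\varepsilon^{-1}$ and $\|g_\delta\|_\infty\leqslant\delta^{-1}$. Applying the spectral theorem to the self-adjoint operators $B$ and $C$ produces bounded (self-adjoint) operators $f_\varepsilon(B)$ and $g_\delta(C)$ such that
\begin{equation*}
f_\varepsilon(B)\,B=B\,f_\varepsilon(B)=E_B(\mathbb{R}\setminus[-\varepsilon,\varepsilon]),\qquad
g_\delta(C)\,C=C\,g_\delta(C)=E_C(\mathbb{R}\setminus[-\delta,\delta]),
\end{equation*}
because $t\mapsto t\cdot f_\varepsilon(t)$ is exactly $\chi_{\mathbb{R}\setminus[-\varepsilon,\varepsilon]}(t)$, and similarly for $g_\delta$.

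With this in hand I would simply factor the product:
\begin{equation*}
E_B(\mathbb{R}\setminus[-\varepsilon,\varepsilon])\,E_C(\mathbb{R}\setminus[-\delta,\delta])
=\bigl(f_\varepsilon(B)\,B\bigr)\bigl(C\,g_\delta(C)\bigr)
=f_\varepsilon(B)\,(BC)\,g_\delta(C).
\end{equation*}
Since $BC$ is compact by hypothesis and the two-sided factors $f_\varepsilon(B)$, $g_\delta(C)$ are bounded, the ideal property of the compact operators yields the claim.

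There isn't really a hard step here; the only thing one has to notice is that the spectral projection $E_B(\mathbb{R}\setminus[-\varepsilon,\varepsilon])$ can be written as $f_\varepsilon(B)\cdot B$ with $f_\varepsilon$ bounded precisely because the singularity of $1/t$ at the origin has been cut off by the indicator function. Once this (quite standard) observation is made the result follows by a one-line calculation, and the proof is symmetric in $B$ and $C$.
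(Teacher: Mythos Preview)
Your argument is correct and in fact more direct than the paper's. You exploit the bounded Borel functional calculus to write each spectral projection as a bounded operator times the original operator, so that the product of projections factors as $f_\varepsilon(B)\,(BC)\,g_\delta(C)$, and compactness follows immediately from the ideal property. The paper instead argues through operator inequalities: setting $P=E_B(\mathbb{R}\setminus[-\varepsilon,\varepsilon])$ and $Q=E_C(\mathbb{R}\setminus[-\delta,\delta])$, it uses $B^2\geqslant\varepsilon^2 P$ and $C^2\geqslant\delta^2 Q$ together with the fact that a positive operator dominated by a compact positive operator is compact, chaining the implications $BC$ compact $\Rightarrow BC^2B$ compact $\Rightarrow BQB$ compact $\Rightarrow BQ$ compact $\Rightarrow QB^2Q$ compact $\Rightarrow QPQ$ compact $\Rightarrow PQ$ compact. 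Your route is shorter and requires only the multiplicativity of the functional calculus; the paper's route avoids introducing auxiliary functions but relies on several standard facts about positive compact operators along the way.
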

\begin{proof}
Set $P=E_B(\mathbb{R}\setminus[-\varepsilon,\varepsilon])$, $Q=E_C(\mathbb{R}\setminus[-\delta,\delta])$.
Then $B^2\geqslant\varepsilon^2 P$, $C^2\geqslant\delta^2 Q$.

Since $BC$ is compact, we see that $BC^2B$ is compact.
Clearly, $BC^2B\geqslant\delta^2 BQB$.
Hence, $BQB$ is compact.
But $BQB=(BQ)(BQ)^*$.
Consequently, $BQ$ is compact.
Then $QB^2Q$ is compact.
Clearly, $QB^2Q\geqslant\varepsilon^2 QPQ$.
Hence, $QPQ$ is compact.
But $QPQ=(QP)(QP)^*$.
Consequently, $QP$ is compact.
It follows that $PQ=(QP)^*$ is compact.
\end{proof}

\begin{lemma}\label{L:gap_in_spectrum}
Let $P_1,\ldots,P_n$ be orthogonal projections in $\mathcal{H}$.
Suppose that $P_i P_j$ is compact for any $i\neq j$.
Then there exists $\varepsilon>0$ such that
\begin{equation*}
\sigma(\sum_{i=1}^n P_i)\cap(0,\varepsilon)=\varnothing.
\end{equation*}
\end{lemma}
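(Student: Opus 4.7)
The plan is to identify the operator $S := \sum_{i=1}^n P_i$ as a bounded self-adjoint operator whose range is closed, and then read off the spectral gap at $0$ from Proposition \ref{P:operator range}. Since $S\geqslant 0$, the full conclusion of the lemma reduces to showing a gap at $0$ on the positive side of the spectrum.

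The key observation is that the hypotheses on $P_1,\ldots,P_n$ are exactly those of Theorem B, so $\sum_{i=1}^n Ran(P_i)$ is closed. Since each $P_i$ is an orthogonal projection, $P_i P_i^* = P_i^2 = P_i$, so applying Corollary \ref{C:sum operator ranges} with $B_i := P_i : \mathcal{H} \to \mathcal{H}$ gives
\begin{equation*}
\sum_{i=1}^n Ran(P_i) = Ran\left(\sum_{i=1}^n P_i P_i^*\right) = Ran(S).
\end{equation*}
Hence $Ran(S)$ is closed. Proposition \ref{P:operator range} then produces $\varepsilon > 0$ with $\sigma(S) \cap ((-\varepsilon,0) \cup (0,\varepsilon)) = \varnothing$, and $S \geqslant 0$ eliminates the left half-interval, leaving $\sigma(S) \cap (0,\varepsilon) = \varnothing$, as required.

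There is essentially no obstacle in this argument: Theorem B supplies all the nontrivial content, and the remainder is a direct assembly of Corollary \ref{C:sum operator ranges} and Proposition \ref{P:operator range}. The only point worth checking is that invoking Theorem B is not circular — it will be generalized later in Theorem \ref{T:sufficient_sum_closed} via the Main Theorem, but Theorem B itself is a previously established result of Svensson and Lang, so its use inside the proof of the Main Theorem is perfectly legitimate. If one wanted a self-contained proof avoiding Theorem B, the natural route would be to assume a sequence $\lambda_k \in \sigma(S) \cap (0,1/k)$, use Weyl's criterion to extract an almost-orthonormal sequence $x_k$ with $Sx_k\to 0$ (forcing $P_i x_k \to 0$ for every $i$), and then extract a contradiction via Lemma \ref{L:compact_product} applied to the pairs $(P_i,P_j)$; but this is longer and unnecessary given the availability of Theorem B.
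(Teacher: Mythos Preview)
Your argument is correct but takes a different route from the paper. The paper proves Lemma~\ref{L:gap_in_spectrum} directly and self-containedly: it sets $\mathcal{K}_i=Ran(P_i)$, forms the operator $\Gamma x=(P_1x,\ldots,P_nx)$ from $\mathcal{H}$ into $\bigoplus_i\mathcal{K}_i$, observes that $\Gamma^*\Gamma=\sum_i P_i$ while $\Gamma\Gamma^*$ has block matrix $(P_i\upharpoonright_{\mathcal{K}_j})_{i,j}$, whose off-diagonal blocks are compact by hypothesis. Thus $\Gamma\Gamma^*-I$ is compact, Weyl's theorem gives $\sigma_e(\Gamma\Gamma^*)\subset\{1\}$, hence a spectral gap at $0$ for $\Gamma\Gamma^*$, and the identity $\sigma(\Gamma^*\Gamma)\setminus\{0\}=\sigma(\Gamma\Gamma^*)\setminus\{0\}$ transfers this gap to $S=\sum_i P_i$.

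Your approach instead imports Theorem~B (Svensson--Lang) wholesale to get closedness of $\sum_i Ran(P_i)$, then reads off $Ran(S)$ closed via Corollary~\ref{C:sum operator ranges} and the gap via Proposition~\ref{P:operator range}. This is valid, and your remark that it is not logically circular is accurate. However, it conflicts with an explicit design goal of the paper: just after the statement of Theorem~\ref{T:sufficient_sum_closed} the author writes that ``we don't use Theorem~B in the proof of Theorem~\ref{T:sufficient_sum_closed}.'' Since Lemma~\ref{L:gap_in_spectrum} feeds into the Main Theorem, which feeds into Theorem~\ref{T:criterion_sum_closed}, which feeds into Theorem~\ref{T:sufficient_sum_closed}, your route would make the paper's generalization of Theorem~B rest on Theorem~B itself --- logically sound, but contrary to the paper's intent of giving an independent proof. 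The paper's direct $\Gamma\Gamma^*$ argument avoids this and is in fact no longer than yours once the black box is unpacked.
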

\begin{proof}
Set $\mathcal{K}_i=Ran(P_i)$, $i=1,\ldots,n$.
Define an operator $\Gamma:\mathcal{H}\to\mathcal{K}_1\oplus\ldots\oplus\mathcal{K}_n$ by
\begin{equation*}
\Gamma x=(P_1 x,\ldots,P_n x),\quad x\in\mathcal{H}.
\end{equation*}
Then $\Gamma^*:\mathcal{K}_1\oplus\ldots\oplus\mathcal{K}_n\to\mathcal{H}$ and
\begin{equation*}
\Gamma^*(x_1,\ldots,x_n)=x_1+\ldots+x_n,\quad x_i\in\mathcal{K}_i,\quad i=1,\ldots,n.
\end{equation*}
Hence, $\Gamma^*\Gamma=\sum_{i=1}^n P_i$.
Clearly, the operator $\Gamma\Gamma^*:\mathcal{K}_1\oplus\ldots\oplus\mathcal{K}_n\to\mathcal{K}_1\oplus\ldots\oplus\mathcal{K}_n$ and
its block decomposition is equal to
\begin{equation*}
\Gamma\Gamma^*=(P_i\upharpoonright_{\mathcal{K}_j}:\mathcal{K}_j\to\mathcal{K}_i\mid 1\leqslant i,j\leqslant n).
\end{equation*}
Since $P_i P_j$ is compact for any $i\neq j$, we conclude that
\begin{equation*}
(\Gamma\Gamma^*)_{i,j}=P_i\upharpoonright_{\mathcal{K}_j}=P_i P_j\upharpoonright_{\mathcal{K}_j}
\end{equation*}
is compact for $i\neq j$. Consequently, $\Gamma\Gamma^*-I$ is compact.
By the Weyl theorem, $\sigma_e(\Gamma\Gamma^*)=\sigma_e(I)\subset\{1\}$.
Hence, $0\notin\sigma_e(\Gamma\Gamma^*)$, whence $\sigma(\Gamma\Gamma^*)\cap(0,\varepsilon)=\varnothing$ for some $\varepsilon>0$.
Since $\sigma(\Gamma^*\Gamma)\setminus\{0\}=\sigma(\Gamma\Gamma^*)\setminus\{0\}$, we conclude that
$\sigma(\sum_{i=1}^n P_i)\cap(0,\varepsilon)=\varnothing$.
\end{proof}

\begin{proof}[Proof of Main Theorem]
\textbf{1.}
Suppose the subspace $\mathcal{H}_\varepsilon$ is infinite dimensional for any $\varepsilon>0$.
We claim that $0\in\sigma_e(A)$.
To prove this, we construct $\{x_k\mid k\geqslant 1\}\in sing(A,0)$ as follows.
Take
\begin{equation*}
x_1\in\mathcal{H}_1,\quad \|x_1\|=1.
\end{equation*}
Suppose $x_1,\ldots,x_k$ have already been defined.
Clearly, we can choose
\begin{equation*}
x_{k+1}\in\mathcal{H}_{1/(k+1)},\quad \|x_{k+1}\|=1
\end{equation*}
such that $x_{k+1}$ is orthogonal to $x_i$, $i=1,\ldots,k$.
Hence, we obtain the sequence $\{x_k\mid k\geqslant 1\}$.
By the construction, $\{x_k\mid k\geqslant 1\}$ is orthonormal.
Moreover, $\|A_i x_k\|\leqslant 1/k$, $i=1,\ldots,N$.
It follows that $\|Ax_k\|\leqslant N/k$, $k\geqslant 1$.
Consequently, $Ax_k\to 0$ as $k\to\infty$.
Hence, $\{x_k\mid k\geqslant 1\}\in sing(A,0)$, whence $0\in\sigma_e(A)$.

\textbf{2.}
Suppose $\mathcal{H}_\varepsilon$ is finite dimensional for some $\varepsilon>0$.
Let us show that $0\notin\sigma_e(A)$.
Define $P_i=E_{A_i}(\mathbb{R}\setminus[-\varepsilon,\varepsilon])$, $i=1,\ldots,N$.
By Lemma~\ref{L:compact_product}, $P_i P_j$ is compact for any $i\neq j$.
By Lemma~\ref{L:gap_in_spectrum}, there exists $\delta>0$ such that $\sigma(\sum_{i=1}^N P_i)\cap(0,\delta)=\varnothing$.
It follows that
\begin{equation*}
\sum_{i=1}^N P_i+\delta Q\geqslant\delta I,
\end{equation*}
where $Q$ is the orthogonal projection onto $Ker(\sum_{i=1}^N P_i)$.
Clearly,
\begin{equation*}
Ker\left(\sum_{i=1}^N P_i\right)=\bigcap_{i=1}^N Ker(P_i)=\bigcap_{i=1}^N E_{A_i}([-\varepsilon,\varepsilon])\mathcal{H}=\mathcal{H}_\varepsilon.
\end{equation*}
Hence, $Q$ is a finite dimensional orthogonal projection.
Since $A_i^2\geqslant\varepsilon^2 P_i$, $i=1,\ldots,N$, we have
\begin{equation*}
\sum_{i=1}^N A_i^2+\varepsilon^2\delta Q\geqslant\sum_{i=1}^N \varepsilon^2 P_i+\varepsilon^2\delta Q\geqslant\varepsilon^2\delta I.
\end{equation*}
Hence,
\begin{equation}\label{E:ineq_1}
\sum_{i=1}^N A_i^2+\mu Q\geqslant\mu I,
\end{equation}
where $\mu=\varepsilon^2\delta$.

Now we are in a position to prove that $0\notin\sigma_e(A)$.
Suppose that $0\in\sigma_e(A)$.
There exists a sequence $\{x_k\mid k\geqslant 1\}\in sing(A,0)$.
We have $(\sum_{j=1}^N A_j)x_k\to 0$ as $k\to\infty$.
Let $i\in\{1,\ldots,N\}$.
Then $A_i(\sum_{j=1}^N A_j)x_k\to 0$ as $k\to\infty$, that is, $\sum_{j=1}^N A_i A_j x_k\to 0$ as $k\to\infty$.
Since $A_i A_j$ is compact for $j\neq i$ and $x_k\to 0$ weakly, we conclude that $A_i A_j x_k\to 0$ as $k\to\infty$ for $j\neq i$.
Consequently, $A_i^2 x_k\to 0$, $k\to\infty$.
Since $\{x_k\mid k\geqslant 1\}$ is bounded, we conclude that $\langle A_i^2 x_k,x_k\rangle\to 0$, $k\to\infty$.
Hence, $\|A_i x_k\|^2\to 0$, $k\to\infty$, whence $A_i x_k\to 0$, $k\to\infty$.
Since $Q$ is a finite dimensional operator and $x_k\to 0$ weakly, we conclude that $Qx_k\to 0$, $k\to\infty$.
From~\eqref{E:ineq_1} it follows that
\begin{equation*}
\sum_{i=1}^N \langle A_i^2 x_k,x_k\rangle+\mu\langle Qx_k,x_k\rangle\geqslant\mu\|x_k\|^2,
\end{equation*}
that is,
\begin{equation*}
\sum_{i=1}^N\|A_i x_k\|^2+\mu\|Qx_k\|^2\geqslant\mu\|x_k\|^2.
\end{equation*}
It follows that $x_k\to 0$ as $k\to\infty$, a contradiction.

Hence, $0\notin\sigma_{e}(A)$.
\end{proof}

\section{Proof of Theorem~\ref{T:criterion_sum_closed}}

\begin{proof}[Proof of Theorem~\ref{T:criterion_sum_closed}]
Let $\mathcal{K}=\mathcal{H}\ominus\mathcal{H}_0$, then $\mathcal{H}=\mathcal{H}_0\oplus\mathcal{K}$.
With respect to this orthogonal decomposition $A_i=0\oplus B_i$, $i=1,\ldots,N$, where $B_i$ is a bounded self-adjoint operator in $\mathcal{K}$.
Since $\mathcal{H}_0=\cap_{i=1}^N Ker(A_i)$, we see that $\cap_{i=1}^N Ker(B_i)=\{0\}$.
Since $A_i A_j$ is compact for $i\neq j$, we conclude that $B_i B_j$ is compact for $i\neq j$.

\textbf{1.}
Suppose $\sum_{i=1}^N Ran(A_i)$ is closed. Let us show that $\mathcal{H}_\delta=\mathcal{H}_0$ for some $\delta>0$.

$\sum_{i=1}^N Ran(B_i)=\sum_{i=1}^N Ran(A_i)$ is closed.
Since
\begin{equation*}
\mathcal{K}\ominus\left(\sum_{i=1}^N Ran(B_i)\right)=\bigcap_{i=1}^N (\mathcal{K}\ominus Ran(B_i))=\bigcap_{i=1}^N Ker(B_i)=\{0\},
\end{equation*}
we conclude that $\sum_{i=1}^N Ran(B_i)=\mathcal{K}$.
By Proposition~\ref{P:sum_operator_ranges}, $\sum_{i=1}^N B_i^2\geqslant\varepsilon I$ for some $\varepsilon>0$.
Set $\delta=\sqrt{\varepsilon/(2N)}$.
We claim that $\mathcal{H}_\delta=\mathcal{H}_0$.
Let us prove this.
We have $E_{A_i}(\cdot)=E_0(\cdot)\oplus E_{B_i}(\cdot)$.
Hence, $E_{A_i}([-\delta,\delta])=I\oplus E_{B_i}([-\delta,\delta])$, whence
$E_{A_i}([-\delta,\delta])\mathcal{H}=\mathcal{H}_0\oplus E_{B_i}([-\delta,\delta])\mathcal{K}$.
Thus
\begin{equation*}
\mathcal{H}_\delta=\mathcal{H}_0\oplus\cap_{i=1}^N E_{B_i}([-\delta,\delta])\mathcal{K}.
\end{equation*}
But $\cap_{i=1}^N E_{B_i}([-\delta,\delta])\mathcal{K}=\{0\}$.
Indeed, suppose $x\in\cap_{i=1}^N E_{B_i}([-\delta,\delta])\mathcal{K}$ and $x\neq 0$.
Then $\|B_i x\|\leqslant\delta\|x\|$, $i=1,\ldots,N$.
Hence,
\begin{equation*}
\left\langle\left(\sum_{i=1}^N B_i^2\right)x,x\right\rangle=\sum_{i=1}^N\|B_i x\|^2\leqslant N\delta^2\|x\|^2=\frac{\varepsilon}{2}\|x\|^2.
\end{equation*}
But $\langle(\sum_{i=1}^N B_i^2)x,x\rangle\geqslant\varepsilon\|x\|^2$.
We get a contradiction.
Hence, $\cap_{i=1}^N E_{B_i}([-\delta,\delta])\mathcal{K}=\{0\}$, and, consequently, $\mathcal{H}_\delta=\mathcal{H}_0$.

\textbf{2.}
Let us prove that if $\mathcal{H}_\varepsilon=\mathcal{H}_0$ for some $\varepsilon>0$, then $\sum_{i=1}^N Ran(A_i)$ is closed.
We will prove a more stronger fact: if $\mathcal{H}_\varepsilon\ominus\mathcal{H}_0$ is finite dimensional for some $\varepsilon>0$, then
$\sum_{i=1}^N Ran(A_i)$ is closed.

Since $A_i^2=0\oplus B_i^2$, we see that $E_{A_i^2}(\cdot)=E_0(\cdot)\oplus E_{B_i^2}(\cdot)$.
Hence, $E_{A_i^2}([-\varepsilon^2,\varepsilon^2])=I\oplus E_{B_i^2}([-\varepsilon^2,\varepsilon^2])$.
But $E_{A_i^2}([-\varepsilon^2,\varepsilon^2])=E_{A_i}([-\varepsilon,\varepsilon])$.
Hence, $E_{A_i}([-\varepsilon,\varepsilon])=I\oplus E_{B_i^2}([-\varepsilon^2,\varepsilon^2])$, whence
$E_{A_i}([-\varepsilon,\varepsilon])\mathcal{H}=\mathcal{H}_0\oplus E_{B_i^2}([-\varepsilon^2,\varepsilon^2])\mathcal{K}$.
Thus
\begin{equation*}
\mathcal{H}_\varepsilon=\mathcal{H}_0\oplus\cap_{i=1}^N E_{B_i^2}([-\varepsilon^2,\varepsilon^2])\mathcal{K}.
\end{equation*}
Thus, $\cap_{i=1}^N E_{B_i^2}([-\varepsilon^2,\varepsilon^2])\mathcal{K}$ is finite dimensional.
By the Main Theorem, $0\notin\sigma_{e}(\sum_{i=1}^N B_i^2)$.
Moreover, since
\begin{equation*}
Ker\left(\sum_{i=1}^N B_i^2\right)=\bigcap_{i=1}^N Ker(B_i)=\{0\},
\end{equation*}
we conclude that $0\notin\sigma_d(\sum_{i=1}^N B_i^2)$.
Hence, $0\notin\sigma(\sum_{i=1}^N B_i^2)$, that is, the operator $\sum_{i=1}^N B_i^2$ is invertible.
It follows that $\sum_{i=1}^N Ran(B_i)=\mathcal{K}$.
Consequently, $\sum_{i=1}^N Ran(A_i)=\mathcal{K}$ is closed.
\end{proof}

\section{Proof of Theorem~\ref{T:sufficient_sum_closed}}

\begin{proof}[Proof of Theorem~\ref{T:sufficient_sum_closed}]
Define
\begin{equation*}
B_p=\sum_{i=1}^{N_p}A_{p,i}A_{p,i}^*,\quad p=1,\ldots,m.
\end{equation*}
Clearly, $B_p$ is a bounded self-adjoint operator in $\mathcal{H}$.
Moreover, $B_p B_q$ is compact for $p\neq q$.

From Corollary~\ref{C:sum operator ranges} it follows that $Ran(B_p)=\mathcal{R}_p$, $p=1,\ldots,m$.
By Proposition~\ref{P:operator range}, there exists $\varepsilon>0$ such that
$\sigma(B_p)\cap((-\varepsilon,0)\cup(0,\varepsilon))=\varnothing$ for $p=1,\ldots,m$.
It follows that $E_{B_p}([-\varepsilon/2,\varepsilon/2])=E_{B_p}(\{0\})$, and, consequently,
$E_{B_p}([-\varepsilon/2,\varepsilon/2])\mathcal{H}=Ker(B_p)$, $p=1,\ldots,m$.
Hence,
\begin{equation*}
\mathcal{H}_{\varepsilon/2}(B_1,\ldots,B_m)=\bigcap_{p=1}^m E_{B_p}([-\varepsilon/2,\varepsilon/2])\mathcal{H}=
\bigcap_{p=1}^m Ker(B_p)=\mathcal{H}_0(B_1,\ldots,B_m).
\end{equation*}
By Theorem~\ref{T:criterion_sum_closed}, $\sum_{p=1}^m Ran(B_p)=\sum_{p=1}^m\mathcal{R}_p$ is closed.
\end{proof}

{\bf Acknowledgement.}
The author is grateful to Alexei Yu. Konstantinov for helpful suggestions.

\bibliographystyle{amsplain}

\end{document}